\newcommand{\M}{\mathcal{M}}
\DeclareMathOperator*{\argmax}{arg\,max}
\newtheorem{thm}{Theorem}
\newtheorem{lem}[thm]{Lemma}
\newtheorem{rmk}{Remark}
\newtheorem{corollary}{Corollary}
\newtheorem{definition}{Definition}
\begin{document}
	\begin{center}
		\Large{\textbf{Robust Fusion Methods for Structured Big Data}}\\[1.cm]
	\end{center}	\begin{center}
	Catherine Aaron$^a$, Alejandro Cholaquidis$^b$, Ricardo Fraiman$^b$ and Badih Ghattas$^c$
\end{center}
	\normalfont

	% Use \authorrunning{Short Title} for an abbreviated version of
	% your contribution title if the original one is too long
	\begin{center}
		$^a$ Universit\'e Clermont Auvergne, Campus Universitaire des C\'ezeaux, France.\\
		$^b$ Universidad de la Rep\'ublica, Facultad de Ciencias, Uruguay.\\
		$^c$ Aix Marseille Universit\'e, CNRS, Centrale Marseille, I2M UMR 7373, 13453, Marseille, France.
	\end{center}

\begin{abstract}
	We address one of the important problems in Big Data, namely how to combine estimators from different subsamples by robust fusion procedures, when we are unable to deal with the whole sample.
We propose a general framework based on the classic idea of `divide and conquer'.
In particular we address in some detail the case of a multivariate location and scatter matrix, the covariance operator for functional data, and clustering problems.
\end{abstract}

\section{Introduction}
Big Data has arisen in recent years to deal with problems in several domains, such as social networks,
biochemistry, health care systems,  politics, and retail, among many others. New developments are necessary to address most of the problems in the area. Typically, classical statistical approaches that perform reasonably well for small data sets fail when dealing with huge data sets. To handle these challenges, new mathematical and computational methods are needed.  \\
The challenges posed by Big Data cover a wide range of various problems, and have been recently considered in a huge literature (see, for instance, \cite{wang2016}, \cite{yu2014}, \cite{ah:17} and the references therein). We address one of these problems, namely, how to combine, using robust techniques, estimators obtained from different subsamples in the case where we are computationally unable to deal with the whole sample. In what follows, we will refer to such approaches as robust fusion methods (RFM).

A general  algorithm is proposed, which is, in spirit, related with the well known idea of divide-and-combine. We consider the case where the data belong to finite and infinite dimensional spaces (functional data).

Functional Data Analysis (FDA) has become a central area of statistics in recent years, having gained much momentum from the work of Ramsay in the early 2000s. Since then, both the quantity and the quality of its results have enjoyed a marked growth, while addressing a great diversity of problems. 
FDA faces several specific challenges, most of them  associated with the infinite-dimensional nature of the data. Some recent important and unavoidable references  for FDA are \cite{hk:12}, \cite{fv:06}, \cite{an:17}, as well as the recent surveys \cite{cu:14} and \cite{vz:00}. 

Divide-and-combine (see for instance \cite{aho:74}) is a well known technique for dealing with hugh data-sets. In the FDA setting have, in \cite{song16}, also been considered recently for the linear regression problem involving Lasso, a problem that is not addressed in the present paper, where we focus on a general robust procedure for different problems.  

The consistency and robustness of our method is studied in the general setting of FDA, and we apply the proposed algorithm to some statistical problems in finite and infinite dimensional settings, namely, the location and scatter matrix, clustering, and impartial trimmed $k$-means.
Also, a new robust estimator of the covariance operator is proposed.

We start by describing one of the simplest problems in this area as a toy example.
Suppose we are interested in the median of a huge set of iid  random variables $\{X_1, \ldots, X_n\}$ with common density $f_X$, and we split the sample into $m$ subsamples of size $l$, so that $n=ml$.
We calculate the median of each subsample and obtain $m$ random variables $Y_1, \ldots, Y_m$.
Then we take the median of the set $Y_1, \ldots, Y_m$, i.e. we consider the well known median of medians, which, in this case, will be our RFM estimator.
It is clear that it does not coincide with the median of the whole original sample $\{X_1, \ldots, X_n\}$, but it will be close.
What else can we say about this estimator regarding its efficiency and robustness?

In this particular case, the RFM estimator is nothing but the median of $m$ iid random variables, but now with a different distribution, given by the distribution of the median of
$l$ random variables with density $f_X$.
Suppose for simplicity that $l=2k+1$.
Then, the density of the random variables $Y_i$ is given by 
\begin{equation}\label{densitymedian}
g_Y(t) = \frac{(2k+1)!}{(k!)^2} F_X(t)^k (1-F_X(t))^kf_X(t).
\end{equation}

On the one hand, if $f_X(F_X^{-1}(0.5))\neq 0$, the empirical median $\hat{\theta}=$\break $med(X_1, \ldots, X_n)$ 
behaves, asymptotically, like the normal distribution centred at the true median $\theta$ with variance $\mathbb{V}(\hat{\theta})=1/(4 n f_X(\theta)^2)$.
On the other hand, $\tilde{\theta}^{RFM}$, the median of medians, behaves asymptotically like the normal distribution centred at 
$\theta$ with variance $\mathbb{V}(\tilde{\theta}^{RFM})=1/(4 m g_Y(\theta)^2)$, where $g_Y(\theta)= (1/2)^{2k}(2k+1)!/(k!^2)f_X(\theta) \sim\sqrt{2k/\pi}$. 
So we can explicitly calculate the asymptotic relative loss of efficiency, i.e. $\lim_{n\to \infty}\mathbb{V}(\hat{\theta})/\mathbb{V}(\tilde{\theta}^{RFM}) = 2/\pi$.\\
In Section \ref{general} we generalize  this RFM idea and study its consistency, robustness, breakdown point, and efficiency. Section \ref{application} shows how the RFM may be applied to multivariate location and scatter matrix estimation, covariance operator estimation for functional data, and robust clustering. The last section provides some simulation results for these problems.

\section{A general setup for RFM.} \label{general}

We start by introducing a general framework for RFM.
The idea is quite simple: given a sample $\{X_1,\ldots, X_n\}$ of iid random elements in a metric space $E$ (for instance $E=\mathbb{R}^d$) and a statistical problem, (such as multivariate location, covariance operators, linear regression, or principal components,  among many others), we split the sample into $m$  subsamples of equal size.
For each subsample we compute a robust solution for the statistical problem considered.
The solution given by RFM corresponds to the deepest point among the $m$ solutions (in terms of the appropriate norm associated to the problem) obtained from the subsamples.
In order to introduce the notion of depth, we will use throughout this paper the following notation. Let  $X$ be a random variable taking values in some Banach space $(E,\|\cdot\|)$, with probability distribution $P_X$, and let $x\in E$.
The depth of $x$ with respect to $P_X$ is defined as follows:

\begin{equation}\label{depth}
D(x,P_X)= 1- \left\|\mathbb E_{P_X} \left( \frac{X-x}{\Vert X - x\Vert}\right)\right\|.
\end{equation}
It was introduced by \cite{ch:96},  formulated (in a different way) by \cite{vz:00},  and extended to a very general setup by \cite{chch:14}.

Given a sample $\{X_1,\dots,X_n\}$, let us write $P_n$ for the empirical measure. The empirical version of \eqref{depth} is
\begin{equation} \label{empdepth}
D(x,P_n)=1-\left\|\mathbb{E}_{P_n}\left( \frac{X-x}{\Vert X - x\Vert}\right)\right\|=1-\frac{1}{n}\left\|\sum_{i=1}^n \frac{X_i-x}{\Vert X_i - x\Vert}\right\|.
\end{equation}

Although we suggest using the depth function, for some statistical problems this is unsuitable, for instance in clustering.
In such cases, the deepest point may be replaced by other robust estimators, as we will show in Section \ref{robclu}.
We summarize our approach in Table \ref{RFM-alg} for a general framework of parameter estimation.
This may be easily applied to any situation where robust estimators exist or can be designed.

\begin{table}[h!]
	\begin{tabular}{|l|}
		\hline
		$\{X_1,\ldots, X_n\}$ iid random elements in a Banach space $E$.\\
		$\theta_0$ a parameter to estimate \\ \hline
		a)  split the sample into $m$ subsamples with $n=ml$ \\
		\hspace{1cm}  $\{X_1,\ldots, X_l\},\{X_{l+1},\ldots, X_{2l}\},\ldots,$ $\{X_{(m-1)l+1},\ldots, X_{lm}\}$.
		\\
		b) Compute a robust estimate of $\theta_0$ on each subsample, obtaining $\hat{\theta}_1,\ldots,\hat{\theta}_m$.
		\\
		c) Compute the final estimate $\tilde{\theta}^{RFM}$ by RFM combining $\hat{\theta}_1,\ldots,\hat{\theta}_m$ \\ by a robust approach.
		\\
		For instance, $\tilde{\theta}^{RFM}$ can be the deepest point, or the average of $40\%$\\
		of the deepest points among the $\hat{\theta}_1,\ldots,\hat{\theta}_m$.\\ \hline
	\end{tabular} 
	\caption{Parameter estimation using RFM}
	\label{RFM-alg}
\end{table}

We will address the consistency, efficiency, robustness, and computational time of the RFM proposals.

\newpage
\subsection{Consistency, robustness and breakdown point of the RFM} \label{consist}

We start by proving that, given a sample $\{X_1,\dots,X_n\}$ of a random element $X$, its deepest point (i.e. the value that maximizes \eqref{empdepth}) converges almost surely to the value that maximizes \eqref{depth}. Although similar results has already been obtained (see for instance \cite{chch:14}), we will need it when $P_n$ is not necessarily the empirical measure associated to a sample, but any measure converging weakly to a probability distribution $P$.
We will need the following assumption. \\

\textbf{H1} A probability measure $P$ defined on a separable Hilbert space $\mathcal{H}$ fulfils H1 if $P(\partial B(y,r))=0$ for all $r>0$ and $y\in \mathcal{H}$, where $\partial A$ stands for the boundary of a set $A\subset \mathcal{H}$. \\
 Observe that H1 is fulfilled if the random variables $\|X-y\|$ are absolutely continuous, for all $y\in \mathcal{H}$, where $X$ is a random variable with distribution $P$.

\begin{thm} \label{cons} Let $\{X_n\}_n$ be a sequence of random elements with common distribution $P_n$, defined in a separable Hilbert space $(\mathcal{H},\|\cdot\|)$. Let $P$ be a probability distribution fulfilling H1. Assume that $P_n\rightarrow P$ weakly, and $\|\mathbb{E}_P((X-x)/\|X-x\|)\|$ has a unique minimum. Then 
	\begin{equation} \label{thcons} 
	\argmax_x \ D(x,P_n)\rightarrow \argmax_x \ D(x,P)\quad  a.s., \text{ as }n\rightarrow \infty.
	\end{equation}
\end{thm}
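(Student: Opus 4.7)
The plan is a classical $M$-estimator consistency argument for the criterion $g_n(x):=1-D(x,P_n)$ and its limit $g(x):=1-D(x,P)$. I would break the argument into three stages: pointwise convergence of $g_n$ to $g$, a compact-uniform upgrade via equicontinuity, and an argmax-consistency step that combines tightness of the empirical maximizers with the uniqueness hypothesis.

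For the pointwise convergence I would linearize the outer Hilbert norm by squaring,
\[
g_n(x)^2=\iint \Bigl\langle \frac{u-x}{\|u-x\|},\frac{v-x}{\|v-x\|}\Bigr\rangle\,dP_n(u)\,dP_n(v),
\]
so that the weak convergence $P_n\otimes P_n\Rightarrow P\otimes P$ combined with the scalar portmanteau theorem does the work: the kernel is bounded by $1$ and continuous off $\{u=x\}\cup\{v=x\}$, a $(P\otimes P)$-null set by H1. Continuity of $g$ and equicontinuity of the family $\{g_n\}$ on bounded sets then follow from the elementary estimate
\[
\Bigl\|\frac{u-x}{\|u-x\|}-\frac{u-y}{\|u-y\|}\Bigr\|\le \frac{2\|x-y\|}{\min(\|u-x\|,\|u-y\|)},
\]
valid off small balls around $x$ and $y$, together with the fact that H1 forces $P(\partial B(x,\varepsilon))=0$, so $P_n(B(x,\varepsilon))\to P(B(x,\varepsilon))\to 0$ as $\varepsilon\to 0$ (controlling the contribution of the ``bad'' set $\{\|u-x\|<\varepsilon\}\cup\{\|u-y\|<\varepsilon\}$). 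Combining pointwise convergence and equicontinuity yields uniform convergence $g_n\to g$ on every compact $K\subset\mathcal{H}$.

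For the argmax step I would first show that the maximizers $\hat x_n$ of $D(\cdot,P_n)$ are bounded. Dominated convergence applied to the pointwise limit $(X-x)/\|X-x\|\to -x/\|x\|$ gives $g(x)\to 1$ as $\|x\|\to\infty$; combining this with tightness of $\{P_n\}$ (which is implied by weak convergence on the separable Hilbert space $\mathcal{H}$) shows that $g_n(x)\to 1$ uniformly in $n$ as $\|x\|\to\infty$, so that $g_n(\hat x_n)\le g_n(x_0)\to g(x_0)<1$ confines $\hat x_n$ to a fixed ball $B(0,R)$. The uniqueness hypothesis combined with continuity of $g$ then gives the separation $\inf_{\|x-x_0\|\ge \eta}g(x)>g(x_0)$ for every $\eta>0$, and uniform convergence of $g_n$ on the closed ball forces $\hat x_n\to x_0$ by a standard subsequence argument. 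The step I expect to be the most delicate is precisely this last one in the infinite-dimensional case: the closed ball is only weakly compact, so one has to extract a weak cluster point $x^*$ of $\hat x_n$, identify $x^*=x_0$ via the uniqueness hypothesis (handled using weak lower semicontinuity of $g$ together with $g_n(\hat x_n)\to g(x_0)$), and then upgrade weak to strong convergence in $\mathcal{H}$ via the identity $\|\hat x_n-x_0\|^2=\|\hat x_n\|^2-2\langle \hat x_n,x_0\rangle+\|x_0\|^2$ together with $\|\hat x_n\|\to\|x_0\|$.
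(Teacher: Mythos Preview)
Your three-stage plan is reasonable in spirit, but Stage~3 has a genuine gap that the weak-compactness workaround does not close. Your Stage~2 delivers uniform convergence of $g_n$ to $g$ only on \emph{compact} subsets of $\mathcal H$, while the maximizers $\hat x_n$ are merely shown to lie in a closed ball, which is not norm-compact. Consequently you cannot infer $g_n(\hat x_n)-g(\hat x_n)\to 0$ from compact-uniform convergence. Your proposed repair then rests on two unproved claims: (i) weak lower semicontinuity of $g$, and (ii) $\|\hat x_n\|\to\|x_0\|$. Neither is justified. For (i), the integrand $(u-x)/\|u-x\|$ depends on $x$ through $\|u-x\|$, which is not weakly continuous in $x$, so weak l.s.c.\ of $g$ is far from automatic. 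For (ii) you give no argument at all. In addition, the separation inequality $\inf_{\|x-x_0\|\ge\eta}g(x)>g(x_0)$ you invoke can fail in infinite dimensions for a continuous, coercive function with a unique minimizer, so it too needs an argument specific to this $g$.

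The paper sidesteps all of these difficulties by establishing \emph{global} uniform convergence $\sup_{x\in\mathcal H}|D(x,P_n)-D(x,P)|\to 0$ in one stroke, via the Billingsley--Tops{\o}e criterion applied to the family $\mathcal F=\{z\mapsto (z-y)/\|z-y\|:y\in\mathcal H\}$. The key computation is that $\{z:\omega_{f_y}(B(z,\delta))>\epsilon\}\subset B(y,\sqrt\delta)$, reducing the criterion to the lemma $\sup_{y\in\mathcal H}P(B(y,\sqrt\delta))\to 0$ as $\delta\to 0$ under H1; the paper proves this carefully via a tightness/compactness-and-continuity argument. Note that this very uniformity in $y$ is also what your equicontinuity step needs and what you glossed over (``$P_n(B(x,\varepsilon))\to P(B(x,\varepsilon))\to 0$'' is pointwise in $x$, not uniform). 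With global uniform convergence in hand, the argmax step no longer requires any compactness of balls, so the infinite-dimensional obstacle disappears. If you want to salvage your route, you should upgrade Stage~2 to uniform convergence on bounded sets---which amounts to proving the same $\sup_y P(B(y,\varepsilon))\to 0$ plus passing to $\sup_n P_n$ via portmanteau---after which Stage~3 can proceed without weak-topology arguments.
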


In order to prove \eqref{thcons} we will use the following fundamental result proved in \cite{bil67} (which still holds when $\mathcal{H}$ is a separable Banach space), see theorem 1 and example 3. \\
 
\textbf{Theorem (Billingsley and Tops{\o}e}). Suppose $S\subset \mathcal{H}$ and let $\mathcal{B}(S,\mathcal{H})$ be the class of all bounded measurable functions mapping $S$ into $\mathcal{H}$. Suppose $\mathcal{F}\subset \mathcal{B}(S,\mathcal{H})$ is a subclass of functions. Then 
\begin{equation} \label{bil1}
\sup_{f\in \mathcal{F}} \left\| \int fdP_n-\int fdP\right\|\rightarrow 0,
\end{equation}
for every sequence $P_n$ that converges weakly to $P$ if, and only if,
$$\sup\{\|f(z)-f(t)\|:f\in \mathcal{F},z,t\in S\}\leq \infty,$$
and for all $\epsilon>0$,
\begin{equation}\label{biltop}
\lim_{\delta\rightarrow 0} \sup_{f\in \mathcal{F}} P(\{x: \omega_f(B(x,\delta))\geq \epsilon\})=0,
\end{equation}
where $\omega_f(A)=\sup\{|f(x)-f(y)|:x,y\in A\}$ and $B(x,\delta)$ is the open ball of radii $\delta>0$. \\

\textit{Proof of Theorem \ref{cons}}.\

Consider $S=\mathcal{H}$ and $\mathcal{F}$ the subclass of functions $\{f_y\}_{y\in \mathcal{H}}$ where $f_y(z)=(z-y)/\|z-y\|$. Then, 
$\sup\{\|f_y(z)-f_y(t)\|:y,x,z\in \mathcal{H}\}\leq 2$. Let $2\sqrt{\delta}<\epsilon$. Then, for all $y$,
\begin{multline*}
\{x: \omega_{f_y}(B(x,\delta))>\epsilon\}=\{x\in B(y,\sqrt{\delta}): \omega_{f_y}(B(x,\delta))>\epsilon\}\cup\\
 \{x\notin B(y,\sqrt{\delta}): \omega_{f_y}(B(x,\delta))>\epsilon\}.
\end{multline*}

 Observe that  $\omega_{f_y}(B(x,\delta))= 2\delta/\|x-y\|$ if $\|x-y\|>\delta$, and so if $x\notin B(y,\sqrt{\delta})$, then 
$\omega_{f_y}(B(x,\delta))\leq 2\sqrt{\delta}<\epsilon$, and so $\{x\notin B(y,\sqrt{\delta}): \omega_{f_y}(B(x,\delta))>\epsilon\}=\emptyset$.
Lastly we get that for all $y$, 
$$\{x: \omega_{f_y}(B(x,\delta))>\epsilon\}=\{x\in B(y,\sqrt{\delta}): \omega_{f_y}(B(x,\delta))>\epsilon\}\subset B(y,\sqrt{\delta}).$$

Now, since $P(\partial B(y,\sqrt{\delta}))=0$ we have that $\mathbb{I}_{B(y_k,\sqrt{\delta})}(x)\rightarrow \mathbb{I}_{B(y,\sqrt{\delta})}(x)$ a.s. w.r.t. $P$,  whenever $y_k\rightarrow y$ for every $y$, and the dominated convergence theorem implies that $P(B(y_k,\sqrt{\delta}))\rightarrow P(B(y,\sqrt{\delta}))$. This entails that $P(B(y,\sqrt{\delta}))$ is a continuous function of $y$, so its maximum in a compact set, is attained. Let $\epsilon>0$ and $K_\epsilon$ be a compact set such that $P((K_\epsilon\ominus B(0,1))^c)<\epsilon$ where $K_\epsilon\ominus B(0,1)=\{z\in K_\epsilon: d(z,K_\epsilon^c)>1\}$. Denote by $y_{\epsilon,\delta}=\argmax_{y\in K_\epsilon} P(B(y,\sqrt{\delta}))$, let us prove that for all fixed $\epsilon>0$, $P(B(y_{\epsilon,\delta},\sqrt{\delta}))\rightarrow 0$ as $\delta\rightarrow 0$. If this is not the case there exists $\eta>0$, $y_n\in K_\epsilon$ and $\delta_n\rightarrow 0$ such that $P(B(y_{n},\sqrt{\delta_n}))>\eta$ for all $n$. Since $K_\epsilon$ is compact we can assume that $y_n\rightarrow y$ for some $y\in K_\epsilon$ (by considering a subsequence). From $P(\partial B(x,r))=0$ for all $x$, it follows that $P(\{y\})=0$ (indeed, consider $x$ and $r>0$ such that $y\in \partial B(x,r)$). Let us define $\rho_n=\max_{j\geq n} (\sqrt{\delta_j}+\|y-y_j\|)$ and $B_n=B(y,\rho_n)$, then  
$P(B_n)\geq \eta$ and $B_1\supseteq B_2 \supseteq\ldots \supseteq B_n \dots$. Finally, $0=P(\{y\})=\lim P(B_n)$, which contradict that $P(B(y_n,\sqrt{\delta_n}))>\eta$.  Now for all $\delta<1$,
\begin{align*}
\sup_{y} P(B(y,\sqrt{\delta}))\leq & \max\Big\{\sup_{y\in K_\epsilon} P\big(B(y,\sqrt{\delta})\big),  P\big((K_\epsilon\ominus B(0,1))^c\big)\Big\},
\end{align*}
therefore $\sup_y P(B(y,\sqrt{\delta}))\leq \max\{P(B(y_{\epsilon,\delta},\sqrt{\delta})),\epsilon\}<\epsilon$ for $\delta$ small enough, showing that \eqref{biltop} holds.
Lastly \eqref{thcons} is a consequence of the uniform convergence of $D(x,P_n)$ to $D(x,P)$ and the argmax argument.\\

The following corollary states the consistency of the RFM explained in Table \ref{RFM-alg} when the sample $X_1,\dots,X_n$ is distributed
as a random variable $X$ with a distribution $P_0$ fulfilling H1.

\begin{corollary} \label{cor0} 
	%\begin{itemize}
		Assume that $P_0$ fulfils H1 and there exists a unique $\theta_0$ such that, for all $l$,
		\begin{equation*}
		\mathbb E_{P_0}\left(\frac{\hat{\theta}_1 - \theta_0}{\Vert \hat{\theta}_1 - \theta_0\Vert}\right) = 0.
		\end{equation*}
		Then, under $P_0$, 	$\tilde{\theta}^{RFM}\rightarrow \theta_0$ a.s., as $m\rightarrow \infty$. 
%		\item[b)]\label{cor1} Assume  that $P$ fulfils H1 and there exists a unique $\theta_{l,P}$ such that 
%		\begin{equation*} 
%		\mathbb E_{P}  \left(\frac{\hat{\theta}_1 - \theta_{l,P}}{\Vert \hat{\theta}_1 - \theta_{l,P}\Vert}\right)=0.
%		\end{equation*}
%		Then, under $P$, $\tilde{\theta}^{RFM}\rightarrow \theta_{l,P}$ a.s., as $m\rightarrow \infty$. 
%		\end{itemize}
\end{corollary}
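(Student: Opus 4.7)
The plan is to read the corollary as a direct application of Theorem~\ref{cons} to the array of block estimators. Because the $m$ blocks of size $l$ are disjoint pieces of an iid $P_0$-sample, the block estimators $\hat\theta_1,\ldots,\hat\theta_m$ are themselves iid with a common law depending only on $l$; call it $Q_l$, and write $Q_{l,m}$ for the empirical measure of these $m$ variables. By step (c) of Table~\ref{RFM-alg} we then have $\tilde\theta^{RFM}=\argmax_x D(x,Q_{l,m})$.

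The hypothesis translates cleanly into depth language. The identity $\mathbb{E}_{P_0}[(\hat\theta_1-\theta_0)/\|\hat\theta_1-\theta_0\|]=0$ is exactly $D(\theta_0,Q_l)=1$, and since $D(\cdot,Q_l)\le 1$ everywhere, $\theta_0$ must be the deepest point of $Q_l$; its uniqueness coincides with the unique-minimum hypothesis of Theorem~\ref{cons} rewritten for $Q_l$. Almost sure weak convergence $Q_{l,m}\Rightarrow Q_l$ as $m\to\infty$ follows from Varadarajan's theorem on the separable Hilbert space $\mathcal H$. Feeding this into Theorem~\ref{cons} with $P_n\leftarrow Q_{l,m}$ and $P\leftarrow Q_l$ delivers $\tilde\theta^{RFM}=\argmax_x D(x,Q_{l,m})\to \argmax_x D(x,Q_l)=\theta_0$ almost surely, which is the claim.

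The delicate point is a compatibility check between the hypotheses. Theorem~\ref{cons} requires the weak limit to satisfy H1, yet the corollary states H1 only for the data-generating law $P_0$. So the real gap to patch is to show that $Q_l$ inherits H1, i.e.\ that $Q_l(\partial B(y,r))=0$ for every ball in $\mathcal H$. I would dispose of it either by treating this non-degeneracy as an implicit additional assumption on the block estimator, or by invoking the observation in the excerpt that H1 holds whenever $\|\hat\theta_1-y\|$ is absolutely continuous for every $y$; for the concrete robust estimators treated in Section~\ref{application} (multivariate location/scatter, covariance operator, trimmed $k$-means centres) this follows from mild absolute-continuity assumptions on $P_0$, so the check is essentially automatic in each application.
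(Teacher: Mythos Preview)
Your argument is correct and matches the paper's intended reasoning: the paper states Corollary~\ref{cor0} without proof, treating it as an immediate application of Theorem~\ref{cons} to the iid block estimators $\hat\theta_1,\ldots,\hat\theta_m$ and their empirical measure, exactly as you reconstruct. Your observation that the hypothesis H1 is literally imposed on $P_0$ rather than on the law $Q_l$ of $\hat\theta_1$ is a genuine imprecision in the paper's statement that you have handled more carefully than the authors themselves; your proposed patches (treat it as implicit, or verify it case by case via absolute continuity of $\|\hat\theta_1-y\|$) are both reasonable and in the spirit of the paper's later applications.
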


Recall that a sequence of estimators $\{\hat{\theta}_n\}_n$ is qualitatively robust at a probability distribution $P$ if for all $\epsilon>0$ there exists $\delta>0$, for all probability distribution $Q$, $\Pi(P,Q)<\delta\Rightarrow \Pi(\mathcal{L}_P(\hat{\theta}_n),\mathcal{L}_Q(\hat{\theta}_n))<\epsilon$ (see \cite{hampel}), where $\Pi$ denotes the Prokhorov distance and $\mathcal{L}_F(\hat{\theta}_n)$ denotes the probability distribution of $\hat{\theta}_n$ under $F$. As $\Pi$ metrizes the weak convergence we have the following corollary.

\begin{corollary}  Robustness of RFM estimators. Under the hypotheses of Corollary \ref{cor0}, $\tilde{\theta}^{RFM}$ is qualitatively robust.

\end{corollary}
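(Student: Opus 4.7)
The plan is to deduce qualitative robustness by combining the consistency given by Corollary~1 with the weak continuity of the depth argmax functional supplied by Theorem~1, using the fact highlighted immediately before the statement that $\Pi$ metrizes weak convergence on $\mathcal{H}$ and, equally, on the space of probability measures over $\mathcal{H}$.

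Write $T(Q)$ for the unique $\theta\in\mathcal{H}$ satisfying $\mathbb{E}_{Q^\ast}[(Y-\theta)/\|Y-\theta\|]=0$, where $Q^\ast$ denotes the law of the sub-sample estimator $\hat{\theta}_1$ when the data have law $Q$. Corollary~1 gives $\tilde{\theta}_m^{RFM}\to T(Q)$ almost surely under $Q$ whenever its hypotheses hold, so $\mathcal{L}_Q(\tilde{\theta}_m^{RFM})\to\delta_{T(Q)}$ weakly, i.e.\ in $\Pi$. Theorem~1 gives weak continuity of $T$ at $P_0$: when $\Pi(P_0,Q)$ is small, $\|T(P_0)-T(Q)\|$ is small, and the Prokhorov distance between the two Dirac limits is bounded by $\|T(P_0)-T(Q)\|$. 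Given $\varepsilon>0$, I would fix $\delta>0$ from this continuity so that $\Pi(P_0,Q)<\delta$ forces $\|T(P_0)-T(Q)\|<\varepsilon/3$, and then take $m_0$ large enough that for $m\geq m_0$ both $\Pi(\mathcal{L}_{P_0}(\tilde{\theta}_m^{RFM}),\delta_{T(P_0)})<\varepsilon/3$ and $\Pi(\mathcal{L}_Q(\tilde{\theta}_m^{RFM}),\delta_{T(Q)})<\varepsilon/3$. A triangle inequality in $\Pi$ then delivers $\Pi(\mathcal{L}_{P_0}(\tilde{\theta}_m^{RFM}),\mathcal{L}_Q(\tilde{\theta}_m^{RFM}))<\varepsilon$, which is qualitative robustness in the sense of the paper.

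The main obstacle is uniformity of the threshold $m_0$ in $Q$, since the classical Hampel definition requires a single $\delta$ that works for every sample size. I would secure this by checking that the Billingsley-Tops\o e bound \eqref{biltop} invoked in the proof of Theorem~1 depends on $Q$ only through its Prokhorov distance to $P_0$, so that the approximation of $\mathcal{L}_Q(\tilde{\theta}_m^{RFM})$ by $\delta_{T(Q)}$ is uniform over a Prokhorov neighborhood of $P_0$. A secondary check is that both H1 and the uniqueness of the argmax persist on that neighborhood, which can be built into the definition of the admissible set of $Q$'s.
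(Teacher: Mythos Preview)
Your approach is the same route as the paper's, just worked out in more detail. The paper's entire argument for this corollary is the sentence preceding the statement, ``As $\Pi$ metrizes the weak convergence we have the following corollary,'' which is an implicit appeal to Hampel's classical theorem (weak continuity of the defining functional at $P$ yields qualitative robustness of the plug-in sequence) combined with the weak continuity supplied by Theorem~\ref{cons}.

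The uniformity-in-$m$ obstacle you flag is real and is precisely what Hampel's general theorem absorbs; the paper does not spell it out. Your proposed fix via uniformity of the Billingsley--Tops{\o}e estimate in $Q$ is not the standard resolution---Hampel's argument instead controls the Prokhorov distance between the empirical measure and the true measure directly---but the underlying idea is the same. Your secondary checks (persistence of H1 and of uniqueness on a Prokhorov neighbourhood of $P_0$, and implicitly the weak continuity of the map $Q\mapsto Q^\ast$ from data law to sub-sample-estimator law) are also legitimate concerns that the paper's one-line justification leaves untouched. In short, you are on the paper's path but more candid about where the footing is incomplete.
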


\begin{rmk} Qualitative robustness ensures the good behaviour of the estimator in a neighbourhood of $P_0$. However, there are some estimators that still converge to $\theta_0$ even if $P$ is far from $P_0$. For instance ``the shorth", defined as the average of the observations lying on the shortest interval containing half of the data, has this property. 
	Indeed, consider the case where $P_0 = U(-1,1)$, $P_1=U(3,4)$, and $P=(1-\alpha) P_0 + \alpha P_1$,  for any $\alpha < 0.5$. This is also the case for the impartial trimmed estimators, the minimum volume ellipsoid, and the redescendent (with compact support) $M$-estimators (see subsection \ref{Mest}). If the estimators for each subsample have this property, the RFM estimator will inherit it.
\end{rmk}

\subsection{Efficiency of the fusion of $M$-estimators} \label{Mest}

In this section we obtain the asymptotic variance of the RFM method, for the special case of $M$-estimators.
Recall that an $M$-estimator $T$ can be defined (see section 3.2 in \cite{hu:09}) by the implicit functional equation
$\int \psi(x;T(F))F(dx)=0$,
where $\psi(x;\theta)=(\partial /\partial \theta)\rho(x;\theta)$ and $F$ stands for the true underlying common distribution of the observations.
For instance, the Maximum Likelihood estimator is obtained with $\rho(x;\theta)=-\log(f(x,\theta))$.
The estimator $T_n$ is given by the empirical version of $T$, based on a sample $\{X_1,\dots,X_n\}$.
It is well known that $\sqrt{n}(T_n-T(F))$ is asymptotically normal with mean 0 and variance $A(F,T)$ given by the integral of the square of the influence curve, i.e.
$A(F,T)=\int IC(x;F,T)^2F(dx)$, where the influence curve, $IC$, is 
$$IC(x;F,T)=\frac{\psi(x;T(F))}{-\int (\partial/\partial \theta)\psi (x;T(F))F(dx)}.$$
For the location problem (i.e. $\int\psi(x-\mu_0)F(dx)=0$), we get $IC(x,F,T)= -\psi(x-\mu_0)/\int \psi'(x-\mu_0)F(dx)$.
The asymptotic efficiency of $T_n$ is defined as 
${\rm Eff}(T_n)=\sigma_{ML}^2/A(F,T)$, where $\sigma_{ML}^2$ is the asymptotic variance of the maximum likelihood estimator.
Then the asymptotic variance of an $M$-estimator built from a sample $T_n^1,\dots,T_n^m$ of $m$ $M$-estimators of $T$ can be calculated easily. The strong consistency of the $M$-estimators under the model (see \cite{hu67}) entails  that $\tilde{\theta}^{RFM}$ built from $M$-estimators is consistent (see Corollary \ref{cor0}) whenever the empirical version of the implicit functional equation has an unique solution. 

 The choice of $m$ and $l$ has an impact on the robustness of the estimator and on the computation time. Indeed, if the computation time of each $\hat{\theta}_i=\mathcal{O}(l^a)$ and the computation time of the fusion step is $\mathcal{O}(m^b)$, then the optimal choice (if $b>1$) is $l=\mathcal{O}(n^{(b-1)/(a+b-1)})$.  

\subsection{Breakdown point for the RFM}

Following \cite{do:82} we consider the finite-sample breakdown point, introduced by Donoho.  Intuitively the breakdown point corresponds to the maximum percentage of outliers (located at the worst possible positions) we can have in a sample before the estimate breaks in the sense that it can be arbitrarily large (or close to the boundary of the parameter space).
\\
\begin{definition} Let $\mathbf{x}=\{x_1,\dots,x_n\}$ be a data-set, $\theta$ an unknown parameter lying in a metric space $\Theta$, and $\hat{\theta}_n=\hat{\theta}_n(\mathbf{x})$ an estimate based on $\mathbf{x}$.
Let $\mathcal{X}_p$ be the set of all data-sets $\mathbf{y}$ of size $n$ having $n-p$ elements in common with $\mathbf{x}$:
	$$\mathcal{X}_p=\{\mathbf{y}:card(\mathbf{y})=n, \ card(\mathbf{x}\cap \mathbf{y})=n-p\}.$$
	Then the breakdown point of $\hat{\theta}_n$ at $\mathbf{x}$ is $\epsilon_n^*(\hat{\theta}_n,\mathbf{x})=p^*/n,$
	where 
		$p^*=\max\{p\geq 0; \forall \mathbf{y}\in \mathcal{X}_p, \ \hat{\theta}_n(\mathbf{y}) \mbox{ is bounded and also bounded away from the}$\\ $\text{ boundary} \ \partial \Theta, \text{ if } \partial \Theta\neq \emptyset  \}.$

\end{definition}

To analyse the breakdown point of the RFM, we consider the case where the breakdown point of the robust estimators is 0.5 (high breakdown point estimators).

For each observation $X_i$ from the sample, let $B_i=1$ if $X_i$ is an outlier and $0$ otherwise.
Assume that the variables $B_i$ are iid following a Bernoulli distribution with parameter $p$ and let $S_j=\sum_{s=1}^l B_{(j-1)l+s}$ be the number of outliers in the subsample $j$, for $j=1, \ldots, m$.
The RFM estimator will breakdown if and only if there are more than $m/2$ cases where $S_j$ is greater than $k$ (recall that $l=2k+1$).

To take a glance of the behaviour of the breakdown point, we performed $5000$ replications where we generated $n=30000$ binomial random variables with parameter $p$.  We split each of the samples of size $30000$  randomly into $m$ subsamples. Next we calculated the number of its subsamples which contained more than $k$ 1's (outliers). In Table \ref{tab1} we report the average number of times (over the 5000 replications) that this number was greater than $m/2$, for different values of $p$ and $m$. The best result is obtained for $m=5$.

\begin{table}[h]
	\footnotesize
	\begin{center}
		\begin{tabular}{|c|c|c|c|c|}
			\hline
			$m$	& $p=0.45$  & $p=0.49$& $p=0.495$  & $p=0.499$\\
			\hline              
			5  & 0  & 0.0020    & 0.0820 & 0.3892  \\
			10 & 0  & 0.0088    & 0.1564 & 0.5352  \\
			30 & 0  & 0.0052    & 0.1426 & 0.5186 \\  
			50 & 0	& 0.0080    & 0.1598 & 0.5412\\    
			100& 0	& 0.0192    & 0.2162 & 0.6084   \\  
			150& 0	& 0.0278    & 0.2728 & 0.6780 \\  
			
			\hline
		\end{tabular}
	\end{center}
	\caption{Average (over 5000 replications) of estimator breakdowns for different values of $m$ and $p$ and fixed $n=30000$; $p$ is the proportion of outliers.}
	\label{tab1}
\end{table}

\section{Some applications of RFM \label{application}}
In this section we will show how RFM may be used to tackle three classic statistical problems for large samples: estimating the multivariate location and scatter matrix, estimating the covariance operator for functional data, and clustering.
For each problem we show how to apply our approach, given in Table \ref{RFM-alg}.
Solutions for many other problems may be derived from these cases (Principal Components, for example, both for non-functional and functional data).

\subsection{Robust fusion for location and scatter matrix in finite dimensional spaces}
 Given an iid random sample $\{X_1,\dots,X_n\}$ in $\mathbb{R}^d$, we consider the location and scatter matrix estimation problem.

To perform RFM we only need to make explicit the estimators used for each of the $m$ subsamples, and the depth function in the fusion stage.
For the location parameters, we propose to use simple robust estimates, denoted by $\hat{\theta}_1,\dots,\hat{\theta}_m$  (see for instance \cite{mmy:06}). 

For the depth function we propose to use the empirical version of
\eqref{depth}, replacing $P_X$ by the empirical distribution $P_m$ of $\{\hat{\theta}_1,\dots,\hat{\theta}_m\}$, 
\begin{equation}\label{dephsloc}
D(\theta, P_m) = 1 - \left\| \frac{1}{m}\sum_{j=1}^m \frac{\hat{\theta}_{j} - \theta}{\|\hat{\theta}_{j}- \theta\|}\right\|,
\end{equation}
where $\theta \in \mathbb{R}^d$, and $\| \cdot \|$ is the Euclidean distance.
Equivalently, for the scatter matrix we use the depth function
\begin{equation}\label{dephscat}
D(\Sigma,P_m) = 1 - \left\| \frac{1}{m}\sum_{j=1}^m \frac{\hat{\Sigma}_{j} - \Sigma}{\|\hat{\Sigma}_{j} - \Sigma\|}\right\|,
\end{equation}
where $\hat{\Sigma}_1,\dots,\hat{\Sigma}_m$ are robust estimators of the scatter matrix, the norm is $\|\Sigma\| = \max_{1\le i \le d} \sum_{j=1}^d | \Sigma_{ij}|$. $P_m$ denotes the empirical distribution of $\{\hat{\Sigma}_1,\dots,\hat{\Sigma}_m\}$. 
A simulation study is presented in Section \ref{sec:sim}.

\subsection{Robust fusion for the covariance operator} \label{covop}

The estimation of the covariance operator of a stochastic process is a very important topic in FDA, which helps to understand the fluctuations of a random element, 
as well as to derive the principal functional components from its spectrum.
Several robust and non-robust estimators have been proposed, see for instance \cite{chch:14} 
and the references therein.
In order to perform RFM, we introduce a new robust estimator to use for each of the $m$ subsamples, 
which can be implemented using parallel computing.
It is based on the notion of impartial trimming in the Hilbert--Schmidt space where the covariance operators are defined.
It was introduced in \cite{go:91} and has been shown to be a very successful tool in robust estimation.
Next, the RFM estimator is defined as the deepest point among the $m$ estimators (`impartial trimmed means') corresponding to each subsample.\\
To better understand the construction of our new estimator, we will first recall the general framework used for the estimation of covariance operators.

\subsubsection{A general framework for the  estimation of covariance operators}

Let $E=L^2(I)$, where $I$ is a finite interval in $\mathbb R$, and $X, X_1, \ldots X_n, \ldots$ be iid random elements taking values in $E$. Assume that $\mathbb E(X(t)^2) < \infty$ for all $t\in I$, and $\int_I \int _I \rho^2(s,t) dsdt < \infty$, so that the covariance function, given by
$
\rho(s,t)=\mathbb E((X(t)-\mu(t))(X(s)-\mu(s))) \ \ \mbox{where} \ \mathbb E(X(t))=\mu(t)
$, is well defined.
For notational simplicity we  assume that $\mu(t)=0,\forall t\in I$. 
Under these conditions, the covariance operator, given by
\begin{equation}
\Gamma_0 (f) (t)= \mathbb E(\langle X,f\rangle X(t))= \int_I \rho(s,t)f(s)ds, \; \; f \in E,
\end{equation}
is diagonalizable, with non-negative eigenvalues $\lambda_i$ such that $ \sum_i \lambda^2_i < \infty$.  Moreover $\Gamma_0$ belongs to the Hilbert--Schmidt space $HS(E)$ of linear operators with norm and inner product given by
\begin{equation}\label{norm}
\Vert \Gamma \Vert^2 _{HS} = \sum_{k=1}^{\infty} \Vert \Gamma(e_k) \Vert^2 < \infty, \ \ \langle \Gamma_1,\Gamma_2\rangle_{HS} = \sum_{k=1}^{\infty} \langle \Gamma_1(e_k), \Gamma_2(e_k)\rangle,
\end{equation}
respectively, where $\{e_k: k \geq 1\}$ is any orthonormal basis of $E$, and $\Gamma,\Gamma_1, \Gamma_2 \in  HS(E)$. 
In particular, $\Vert \Gamma_0 \Vert_{HS}^2 = \sum_{i=1}^{\infty} \lambda_i^2$.
Given an iid sample $\{X_1, \ldots,X_n\}$, we define the Hilbert--Schmidt operators of rank-one, $W_i: E \rightarrow E$, as
$$W_i(f) = \langle X_i, f \rangle X_i(.),  \hspace{2mm}  i=1, \ldots n.
$$
Let $\phi_i = X_i/\Vert X_i\Vert$, then $W_i(\phi_i ) = \Vert X_i\Vert^2 \phi_i=: \eta_i \phi_i.$

The standard estimator of $\Gamma_0$ is just the average of these operators, i.e. $\hat \Gamma_n = \frac{1}{n} \sum_{i=1}^n W_i$, which is a consistent estimator of $\Gamma_0$ by the Law of Large Numbers in the space $HS(E)$.
We replace this average by a trimmed version in the space $HS(E)$.

\subsubsection{A new robust estimator for the covariance operator}

Our proposal is to consider an impartial trimmed estimator as a resistant estimator. The notion of impartial trimming was introduced in \cite{go:91}, and the functional data setting was considered in \cite{caf:06}, from where one can can obtain the asymptotic theory for our setting. The construction of our estimator needs an explicit expression of the distances $\Vert W_i - W_j \Vert$, $1 \leq i \leq j \leq n$, which we will derive using the following lemma.

\begin{lem}  We have that 
	\begin{equation} \label{eqlem1}
	d_{ij}^2:=\Vert W_i - W_j \Vert_{HS}^2 =  \Vert X_i\Vert^4 + \Vert X_j\Vert^4 - 2  \langle X_i, X_j\rangle^2 \quad  \text{ for } 1 \leq i \leq j \leq n.
	\end{equation}
\end{lem}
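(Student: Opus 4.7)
The plan is to expand the Hilbert--Schmidt norm of the difference using the inner product structure, $\|W_i - W_j\|_{HS}^2 = \|W_i\|_{HS}^2 + \|W_j\|_{HS}^2 - 2\langle W_i, W_j\rangle_{HS}$, and then compute each piece using the rank-one structure of the $W_k$.

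First I would compute $\|W_i\|_{HS}^2$ by choosing an orthonormal basis whose first element is $\phi_i = X_i/\|X_i\|$ (assuming $X_i\neq 0$; the degenerate case is trivial since $W_i=0$). Then $W_i(\phi_i) = \|X_i\|^2 \phi_i$ and $W_i(e_k)=0$ for every $e_k$ orthogonal to $X_i$, so the series defining $\|W_i\|_{HS}^2$ collapses to $\|X_i\|^4$. Since the Hilbert--Schmidt norm is independent of the chosen basis, this gives the two diagonal terms in \eqref{eqlem1}.

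Next I would evaluate the cross term $\langle W_i, W_j\rangle_{HS}$ using the definition in \eqref{norm} with an arbitrary orthonormal basis $\{e_k\}$. For each $k$, the rank-one form of $W_i$ and $W_j$ gives
\begin{equation*}
\langle W_i(e_k), W_j(e_k)\rangle = \langle X_i, e_k\rangle\langle X_j, e_k\rangle \,\langle X_i, X_j\rangle,
\end{equation*}
and summing in $k$ I would invoke Parseval's identity, $\sum_k \langle X_i,e_k\rangle\langle X_j,e_k\rangle = \langle X_i, X_j\rangle$, to conclude $\langle W_i, W_j\rangle_{HS} = \langle X_i, X_j\rangle^2$. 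Substituting into the expansion yields the claimed formula.

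There is no real obstacle here; the only mild care needed is in justifying the basis-independence step and ensuring that the interchange of summation and inner product is legitimate (which follows immediately because the $W_k$ are Hilbert--Schmidt and the relevant sums are absolutely convergent by Cauchy--Schwarz and Parseval). Hence the argument is essentially a two-line computation once the Hilbert--Schmidt inner product is unpacked on rank-one operators.
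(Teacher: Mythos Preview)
Your argument is correct and mirrors the paper's proof almost line for line: both expand $\|W_i-W_j\|_{HS}^2$ bilinearly, identify the diagonal terms as $\|X_i\|^4$ and $\|X_j\|^4$, and evaluate the cross term via the rank-one formula together with the Parseval identity $\sum_k \langle X_i,e_k\rangle\langle X_j,e_k\rangle = \langle X_i,X_j\rangle$. The only cosmetic difference is that you compute $\|W_i\|_{HS}^2$ by choosing a basis adapted to $\phi_i$, whereas the paper simply writes it as $\eta_i^2$.
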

\begin{proof}	
	Let us write
	\begin{align*}
	\langle W_i  -W_j, W_i  -W_j\rangle_{HS} =&\  \langle W_i  , W_i\rangle_{HS} + \langle W_j , W_j\rangle_{HS} -2\langle W_i , W_j\rangle_{HS} \\
	=&\ \eta_i^2 +  \eta_j^2 -2 \sum_{k=1}^{\infty} \langle W_i(e_k), W_j(e_k)\rangle\\
	=&\  \eta_i^2 +  \eta_j^2 -2 \sum_{k=1}^{\infty}\big\langle \langle X_i,e_k\rangle X_i, \langle X_j,e_k\rangle X_j\big\rangle\\
	=&\ \eta_i^2 +  \eta_j^2 -2\langle X_i,X_j\rangle \sum_{k=1}^{\infty}\langle X_i,e_k\rangle \langle X_k,e_k\rangle.
	\end{align*}
	Now Eq. \eqref{eqlem1} follows from the identity
	$$\sum_{k=1}^{\infty}\langle X_i,e_k\rangle \langle X_k,e_k\rangle =\langle X_i,X_j\rangle.$$
\end{proof}

Given the sample, which we have assumed with mean zero for notational simplicity, and $0< \alpha< 1$, we provide a simple algorithm to calculate an approximate impartial trimmed mean estimator of  the covariance operator which is strongly consistent.\\

\textbf{STEP 1:} Calculate $d_{ij}=\Vert W_i - W_j \Vert_{HS}$,  $1 \leq i \leq j \leq n$, using Lemma 1.\\

\textbf{STEP 2:}  Let $r=\lfloor(1-\alpha)n\rfloor+1$.
For each $i=1, \ldots n$, consider the set of indices $I_i \subset \{1, \ldots ,n\}$ corresponding to the $r$ nearest neighbours of $W_i$ among $\{W_1, \ldots, W_n\}$, and
the order statistic of the vector $(d_{i1}, \ldots , d_{in})$, $d_i^{(1)}\leq \ldots \leq d_i^{(n)}$.\\

\textbf{STEP 3:} Let $\gamma = \text{argmin} \{ d_1^{(r)}, \ldots, d_n^{(r)} \}$. \\

\textbf{STEP 4:} The impartial trimmed mean estimator of $\Gamma_0$ is given by the average of the $r$ nearest neighbours of $W_{\gamma}$ among  $\{ W_1, \ldots, W_n\}$, i.e the average of the rank-one operators $W_i$ such that $i\in I_\gamma$.
The covariance function is then estimated by $\hat\rho(s,t) = \frac{1}{r} \sum_{j \in I_\gamma }X_j(s)X_j(t)$. Observe that Steps 1 and 2 of the algorithm can be performed using parallel computing.

The final estimator given by the RFM may be obtained by taking the deepest point (or the average of the $40\%$ deepest points) among the $m$ estimators obtained from the algorithm above. The norm used for the depth function  in this case is the functional analogue of \eqref{dephscat}.

\subsection{Robust fusion for cluster analysis} \label{robclu}
In this section we describe a robust fusion method for clustering. Our approach is based on the use of impartial trimmed $k$--means (ITkM, see \cite{cagm:97}) in two steps. In the first one we apply ITkM with a given trimming level $\alpha_1$ to each of the $m$ subsamples, and obtain $m$ sets of $k$ centres $\hat{\mathcal M_1}, \ldots, \hat{\mathcal M}_m$. In the second step we apply ITkM with a trimming level $\alpha_2$ to the set $\cup_{i=1}^m \hat{\mathcal M_i}$,  as suggested in \cite{cagm:97} (Section 5.1). We start by describing briefly ITkM. 

\subsubsection{Impartial trimmed $k$-means} Given a sample $\{X_1, \ldots, X_n\} \subset \mathbb R^d$, a trimming level $0< \alpha <1$, and the number of clusters $k$, ITkM looks for a set $\{\hat m_1, \ldots, \hat m_k\} \subset \mathbb R^d$ and a partition of the space $C_0, C_1, \ldots, C_k$ that minimizes the loss function
$$
\frac{1}{n - [n\alpha]} \sum_{j=1}^k \sum_{X_i \in C_j} \Vert X_i - \hat m_j \Vert ^2.
$$
Here, $C_0$ is the set of trimmed data (with cardinality $\lfloor n \alpha\rfloor$).
Let $X \in \mathbb R^d$ be a random vector with distribution $P_X$, the number of clusters $k$, and a trimming proportion $0<\alpha <  1$. 
\begin{itemize}
	\item For every $k$-set $\mathcal{M} = \{m_1, \ldots, m_k \}$, with $m_j \in \mathbb R^d$ for all $j=1, \ldots, k$, and $x \in \mathbb R^d$, we define
	$$
	d(x, \mathcal{M}) := \min\big\{\Vert x - m_1\Vert, \ldots, \Vert x - m_k \Vert\big\}.
	$$
	\item The set of trimming functions for $P_X$ at level $\alpha$ is defined by
	$$
	\tau_{\alpha}(P_X) = \Big\{ \tau: \mathbb R^d \to [0,1], \ \mbox{measurable, fulfilling} \ \ \int \tau(x) dP_X(x) \geq 1 - \alpha\Big\}.
	$$
	The functions in $\tau_{\alpha}(P_X)$ are a natural generalization of the indicator functions $\mathbf {1}_A$ with $P_X(A) = 1 - \alpha$.
	
	\item For each pair  $(\tau, \M)$ such that $\tau \in \tau_{\alpha}(P_X)$ and $\M\subset \mathbb{R}^{d}$ with $\#\M=k$,  let us consider the function
	$$
	\mathcal{V}(\tau, \M, P_X) = \frac{1}{\int \tau(x) dP_X(x)} \int \tau(x) d^2(x,\M) dP_X(x).
	$$
	Lastly, we define
	\begin{equation} \label{func}
	\mathcal{V}(P_X)= \inf_{\tau\in \tau_{\alpha}(P_X)} \inf_{\substack{\M\subset\mathbb{R}^{d}\\ \#\M=k}} \mathcal{V}(\tau,\M,P_X).
	\end{equation}
\end{itemize} 
Corollary 3.2 in \cite{cagm:97} proves that there exists a pair (not necessarily unique) $(\tau^*, \M^*)$ attaining the value $\mathcal{V}(P_X)$.
Moreover, if $P_X$ is absolutely continuous w.r.t. Lebesgue measure,  $\tau^* = \mathbf 1 _{B(\M^*, r^*)}$ with   $r^*=r(\alpha, \M^*)=\inf\{r\geq 0: P_X(B(\M^*,r)) \geq 1-\alpha\}$ and $B(\M^*,r)=\{x\in \mathbb{R}^d:d(x,\M^*)\leq r\}$.

Let us denote by $P_n$ the empirical distribution based on the sample. Theorem 3.6 in \cite{cagm:97} proves that if $P_X$ is absolutely continuous w.r.t. Lebesgue measure and there exists a unique pair $(\tau^*, \M^*)$ solving \eqref{func}, then $\mathcal{V}(P_n)\rightarrow \mathcal{V}(P_X)$ a.s.
Moreover, if ${\hat{\M}}$ is any sequence of empirical trimmed $k$-means, then $d_H(\hat{\M},\M^*)\rightarrow 0$ a.s., where $d_H$ denotes the Hausdorff distance.

It is clear that in this case $\hat{\tau}_n=\mathbf{1}_{B(\hat{\M}, \hat{r})}\rightarrow \tau^*$ $P_X$ a.s., where $\hat{r}=\inf\{r\geq 0: P_n(B(\hat{\M},r))\geq 1-\alpha\}$.

$\M^*$ and $\hat \M$ induce partitions of $B(\M^*, r^*)$ and $B(\hat \M, \hat r^*)$ respectively, into $k$--clusters, by defining, for $i=1,\dots,k$,
\begin{equation}\label{grupos}
\text{Cluster}  \ C_i:= \big\{x \in B(\M^*, r^*):  \Vert x - m^*_i\Vert \leq  \min_{j \neq i} \Vert x - m^*_j \Vert \big\},
\end{equation}
\begin{equation} \label{gruposempiricos}
\text{Cluster} \  \hat C_i:= \big\{x \in B(\hat \M, \hat r):  \Vert x - \hat m_i\Vert \leq \min_{j \neq i} \Vert x - \hat m_j \Vert \big\}.
\end{equation}
The points at a boundary between clusters can be assigned arbitrarily.
A functional version of ITkM can be found in \cite{caf:06}.
With this in hand, the fusion step of the RFM is done by applying ITkM to the set of the $k\times m$ centres. The whole algorithm is summarized in Table \ref{RFM-cluster}.

\begin{table}[h!]
	\begin{tabular}{|l|} \hline
		1) Split the sample into $m$ subsamples (recall that $n=ml$).
		\\
		2) To each subsample, apply the empirical version of $\alpha$-ITkM with $\alpha=\alpha_1$ and \\
		\hspace{0.5cm}obtain $\hat \M_{1}, \ldots, \hat \M_{m}$, each one with $k$ points in $\mathbb R^d$.\\
		3) Apply  the empirical version of $\alpha$-ITkM with $\alpha=\alpha_2$ to the set $\cup_{i=1}^m \hat \M_{i}$.
		\\
		4) Obtain the output of the algorithm $(\hat \M_{RFM}, \hat r_{RMF})$.\\
		5) Build the clusters by applying \eqref{gruposempiricos}.
		\\ \hline
	\end{tabular}
	\caption{RFM algorithm for clustering}
	\label{RFM-cluster}
\end{table}

\section{Simulation results}\label{sec:sim}
We now describe the simulations done with the RFM for the three applications described in the previous sections.
As the design of each simulation is specific to its application, we describe them separately.\\
All the simulations were done using an 8-core PC, Intel core i7-3770 CPU, 8GB of RAM, 64 bit  processor, with the R software package v. 3.3.0 running under Ubuntu.

\subsection{ Location and scatter matrix for finite dimensional spaces}

We use the same simulations to analyse both the location of the parameters and their scatter matrix. For the robust estimator we have applied the function CovMest in the R-package rrcov with the parameters given by default.\\ 
We draw samples from a centred $5$-dimensional Gaussian  distribution with a covariance matrix with all its off-diagonal elements equal to $0.2$.
For the outliers we use a $5$-dimensional Cauchy distribution with independent coordinates centred at $50$.
 We test two contamination levels, $p=0.2$ and $p=0.4$. 
We vary  the sample size $n$ within the set $\{0.1E6, 5E6,10E6\}$ and the number of subsamples $m\in \{100,500,1000,10000 \}$.
We replicate each simulation case $K=5$ times and report the average.
The estimators obtained by the RFM are the values which maximize the depth functions given in Eqs \eqref{dephsloc} and \eqref{dephscat} for the location and the scatter matrix respectively.
In each case, the maximization is done over the set of the $m$ estimates obtained from the subsamples.

The mean squared error (averaged over 5 replicates) for the location problem are given in Table \ref{location}.
 The estimators considered are the following: the average of the whole sample (MLE), the average of the robust location estimators (avROB),  the average of the $40\%$ deepest robust estimators (RFM1), and the deepest robust estimator (RFM).

\begin{table}[h!]
	\footnotesize
	\caption{Location Estimators for $p=0.2$ and $p=0.4$. \label{location}} 
\begin{center}
\begin{tabular}{rr|rrrr|rrrr}
\hline
\multicolumn{1}{c}{}&\multicolumn{1}{c}{}&\multicolumn{1}{c}{MLE}&\multicolumn{1}{c}{avROB}&\multicolumn{1}{c}{RFM1}& RFM&\multicolumn{1}{c}{MLE}&\multicolumn{1}{c}{avROB}&\multicolumn{1}{c}{RFM1}&\multicolumn{1}{c}{RFM}\tabularnewline
\hline
\multicolumn{1}{c}{$n$}&\multicolumn{1}{c}{$m$}&\multicolumn{4}{|c|}{$p=0.2$}&\multicolumn{4}{c}{$p=0.4$}\\
\hline
$ 0.1$&$  100$&$31.3$&$0.0098$&$0.0124$&$0.0297$&$44.2$&$0.0042$&$0.0076$&$0.0288$\tabularnewline
$ 0.1$&$  500$&$31.3$&$0.0097$&$0.0112$&$0.0426$&$44.2$&$0.1070$&$0.0081$&$0.0427$\tabularnewline
$ 0.1$&$ 1000$&$31.3$&$0.0097$&$0.0109$&$0.0477$&$44.2$&$1.3400$&$0.0231$&$0.0416$\tabularnewline
$ 1.0$&$  100$&$21.4$&$0.0021$&$0.0029$&$0.0074$&$44.1$&$0.0038$&$0.0045$&$0.0087$\tabularnewline
$ 1.0$&$  500$&$21.4$&$0.0021$&$0.0037$&$0.0110$&$44.1$&$0.0038$&$0.0038$&$0.0164$\tabularnewline
$ 1.0$&$ 1000$&$21.4$&$0.0021$&$0.0030$&$0.0159$&$44.1$&$0.0038$&$0.0053$&$0.0186$\tabularnewline
$ 1.0$&$10000$&$21.4$&$0.0022$&$0.0035$&$0.0261$&$44.1$&$1.3900$&$0.0186$&$0.0375$\tabularnewline
$ 5.0$&$  100$&$22.0$&$0.0009$&$0.0014$&$0.0032$&$45.9$&$0.0007$&$0.0014$&$0.0044$\tabularnewline
$ 5.0$&$  500$&$22.0$&$0.0009$&$0.0010$&$0.0056$&$45.9$&$0.0007$&$0.0014$&$0.0073$\tabularnewline
$ 5.0$&$ 1000$&$22.0$&$0.0009$&$0.0014$&$0.0071$&$45.9$&$0.0007$&$0.0014$&$0.0097$\tabularnewline
$ 5.0$&$10000$&$22.0$&$0.0009$&$0.0015$&$0.0147$&$45.9$&$0.0013$&$0.0011$&$0.0159$\tabularnewline
$10.0$&$  100$&$23.5$&$0.0009$&$0.0013$&$0.0026$&$47.0$&$0.0005$&$0.0010$&$0.0033$\tabularnewline
$10.0$&$  500$&$23.5$&$0.0009$&$0.0012$&$0.0038$&$47.0$&$0.0005$&$0.0009$&$0.0052$\tabularnewline
$10.0$&$ 1000$&$23.5$&$0.0009$&$0.0012$&$0.0047$&$47.0$&$0.0005$&$0.0008$&$0.0056$\tabularnewline
$10.0$&$10000$&$23.5$&$0.0009$&$0.0012$&$0.0090$&$47.0$&$0.0005$&$0.0009$&$0.0102$\tabularnewline
\hline
\end{tabular}\end{center}
\end{table}

We can see that the estimator obtained by the RFM behaves very well.
Depending on the structure of the outliers, the mean of the robust estimates may behave well or not.
Even if only one of the subsamples contains a high proportion of outliers causing the robust estimator to break down, the average of the robust estimators will break down.
On the other hand, the deepest $M$-estimator always behaves well.
 The performances of both estimators decrease in general with $m$.

The estimation errors for the covariance are given in Table \ref{signif1} ($p=0.2$) and Table \ref{signif2} ($p=0.4$).
We compare the MLE estimator (MLE), a robust estimator based on the whole sample (ROB), the average of the robust scatter matrix estimators (avROB),  the average of the $40\%$ deepest robust estimators (RFM1), and the deepest robust estimator (RFM). We also report the average time in seconds necessary for both the global estimator (T0, over the whole sample), and T1, the estimator obtained by fusion (including computing the estimators over subsamples and aggregating them by fusion). Since the second step of the algorithm (see point b) in Table \ref{RFM-alg})  can be parallelized,  in practice the computational time T1 can be divided almost by $m$. The results of RFM are very good for the covariance matrix as well.

\begin{table}[h!]
	\footnotesize
	\caption{Covariance estimators. Using MLE and robust estimates over the entire sample, and aggregating by average, trimmed average or fusion of $m$ subsamples estimators, $p=0.2$.} \label{signif1}
\begin{center}
\begin{tabular}{rrrrrrrrr}
\hline
\multicolumn{1}{c}{$n$}&\multicolumn{1}{c}{$m$}&\multicolumn{1}{c}{T0}&\multicolumn{1}{c}{T1}&\multicolumn{1}{c}{MLE}&\multicolumn{1}{c}{ROB}&\multicolumn{1}{c}{avROB}&\multicolumn{1}{c}{RFM1}&\multicolumn{1}{c}{RFM}\tabularnewline
\hline
$ 0.1$&$  100$&$ 0.460$&$   4.205$&$23688000$&$0.2594$&$0.2597$&$0.2598$&$0.3722$\tabularnewline
$ 0.1$&$  500$&$ 0.460$&$  14.527$&$23688000$&$0.2594$&$0.2675$&$0.2498$&$0.4810$\tabularnewline
$ 0.1$&$ 1000$&$ 0.460$&$  23.992$&$23688000$&$0.2594$&$0.2748$&$0.2418$&$0.6130$\tabularnewline
$ 1.0$&$  100$&$ 3.444$&$   6.524$&$ 1617200$&$0.2342$&$0.2345$&$0.2368$&$0.2656$\tabularnewline
$ 1.0$&$  500$&$ 3.444$&$  24.028$&$ 1617200$&$0.2342$&$0.2353$&$0.2371$&$0.3189$\tabularnewline
$ 1.0$&$ 1000$&$ 3.444$&$  45.307$&$ 1617200$&$0.2342$&$0.2360$&$0.2381$&$0.3295$\tabularnewline
$ 1.0$&$10000$&$ 3.444$&$ 945.350$&$ 1617200$&$0.2342$&$0.2464$&$0.2075$&$0.4982$\tabularnewline
$ 5.0$&$  100$&$20.528$&$  15.984$&$ 1981900$&$0.2317$&$0.2316$&$0.2340$&$0.2495$\tabularnewline
$ 5.0$&$  500$&$20.528$&$  33.289$&$ 1981900$&$0.2317$&$0.2316$&$0.2331$&$0.2687$\tabularnewline
$ 5.0$&$ 1000$&$20.528$&$  68.342$&$ 1981900$&$0.2317$&$0.2318$&$0.2336$&$0.2842$\tabularnewline
$ 5.0$&$10000$&$20.528$&$1312.800$&$ 1981900$&$0.2317$&$0.2342$&$0.2267$&$0.3810$\tabularnewline
$10.0$&$  100$&$42.174$&$  29.168$&$28135000$&$0.2307$&$0.2306$&$0.2322$&$0.2445$\tabularnewline
$10.0$&$  500$&$42.174$&$  49.992$&$28135000$&$0.2307$&$0.2307$&$0.2322$&$0.2567$\tabularnewline
$10.0$&$ 1000$&$42.174$&$  73.701$&$28135000$&$0.2307$&$0.2308$&$0.2315$&$0.2660$\tabularnewline
$10.0$&$10000$&$42.174$&$1291.000$&$28135000$&$0.2307$&$0.2323$&$0.2290$&$0.3439$\tabularnewline
\hline
\end{tabular}\end{center}
\end{table}

\begin{table}[!tbp]
	\footnotesize
	\caption{Covariance estimators. Using MLE and robust estimates over the entire sample, and aggregating by average, trimmed average or fusion the $m$ subsamples estimators, $p=0.4$.\label{signif2}} 
\begin{center}
\begin{tabular}{rrrrrrrrr}
\hline
$n$&$m$&\multicolumn{1}{c}{T0}&\multicolumn{1}{c}{T1}&\multicolumn{1}{c}{MLE}&\multicolumn{1}{c}{ROB}&\multicolumn{1}{c}{avROB}&\multicolumn{1}{c}{RFM1}&\multicolumn{1}{c}{RFM}\tabularnewline
\hline
$ 0.1$&$  100$&$ 0.581$&$   3.416$&$  448210$&$0.8247$&$  0.8348$&$0.8378$&$1.0111$\tabularnewline
$ 0.1$&$  500$&$ 0.581$&$  13.614$&$  448210$&$0.8247$&$ 16.3120$&$0.8057$&$1.2548$\tabularnewline
$ 0.1$&$ 1000$&$ 0.581$&$  22.827$&$  448210$&$0.8247$&$205.3000$&$0.7772$&$1.5159$\tabularnewline
$ 1.0$&$  100$&$ 2.631$&$   5.622$&$ 6030100$&$0.8081$&$  0.8094$&$0.8114$&$0.8790$\tabularnewline
$ 1.0$&$  500$&$ 2.631$&$  21.131$&$ 6030100$&$0.8081$&$  0.8143$&$0.8103$&$0.9462$\tabularnewline
$ 1.0$&$ 1000$&$ 2.631$&$  39.752$&$ 6030100$&$0.8081$&$  0.8201$&$0.8059$&$0.9690$\tabularnewline
$ 1.0$&$10000$&$ 2.631$&$ 833.530$&$ 6030100$&$0.8081$&$203.9100$&$0.7706$&$1.2760$\tabularnewline
$ 5.0$&$  100$&$16.651$&$  14.126$&$29809000$&$0.8010$&$  0.8012$&$0.8035$&$0.8299$\tabularnewline
$ 5.0$&$  500$&$16.651$&$  30.762$&$29809000$&$0.8010$&$  0.8021$&$0.8025$&$0.8571$\tabularnewline
$ 5.0$&$ 1000$&$16.651$&$  60.389$&$29809000$&$0.8010$&$  0.8032$&$0.8020$&$0.8740$\tabularnewline
$ 5.0$&$10000$&$16.651$&$1239.300$&$29809000$&$0.8010$&$  0.9024$&$0.7877$&$1.0311$\tabularnewline
$10.0$&$  100$&$33.922$&$  24.757$&$93071000$&$0.7988$&$  0.7989$&$0.8013$&$0.8185$\tabularnewline
$10.0$&$  500$&$33.922$&$  43.999$&$93071000$&$0.7988$&$  0.7993$&$0.8007$&$0.8420$\tabularnewline
$10.0$&$ 1000$&$33.922$&$  68.787$&$93071000$&$0.7988$&$  0.8001$&$0.8001$&$0.8555$\tabularnewline
$10.0$&$10000$&$33.922$&$1486.100$&$93071000$&$0.7988$&$  0.8117$&$0.7939$&$0.9403$\tabularnewline
\hline
\end{tabular}\end{center}
\end{table}

\subsection{ Covariance operator} \label{covfun}

\begin{figure}[h!]
	\begin{center}	
		\includegraphics[scale=0.3]{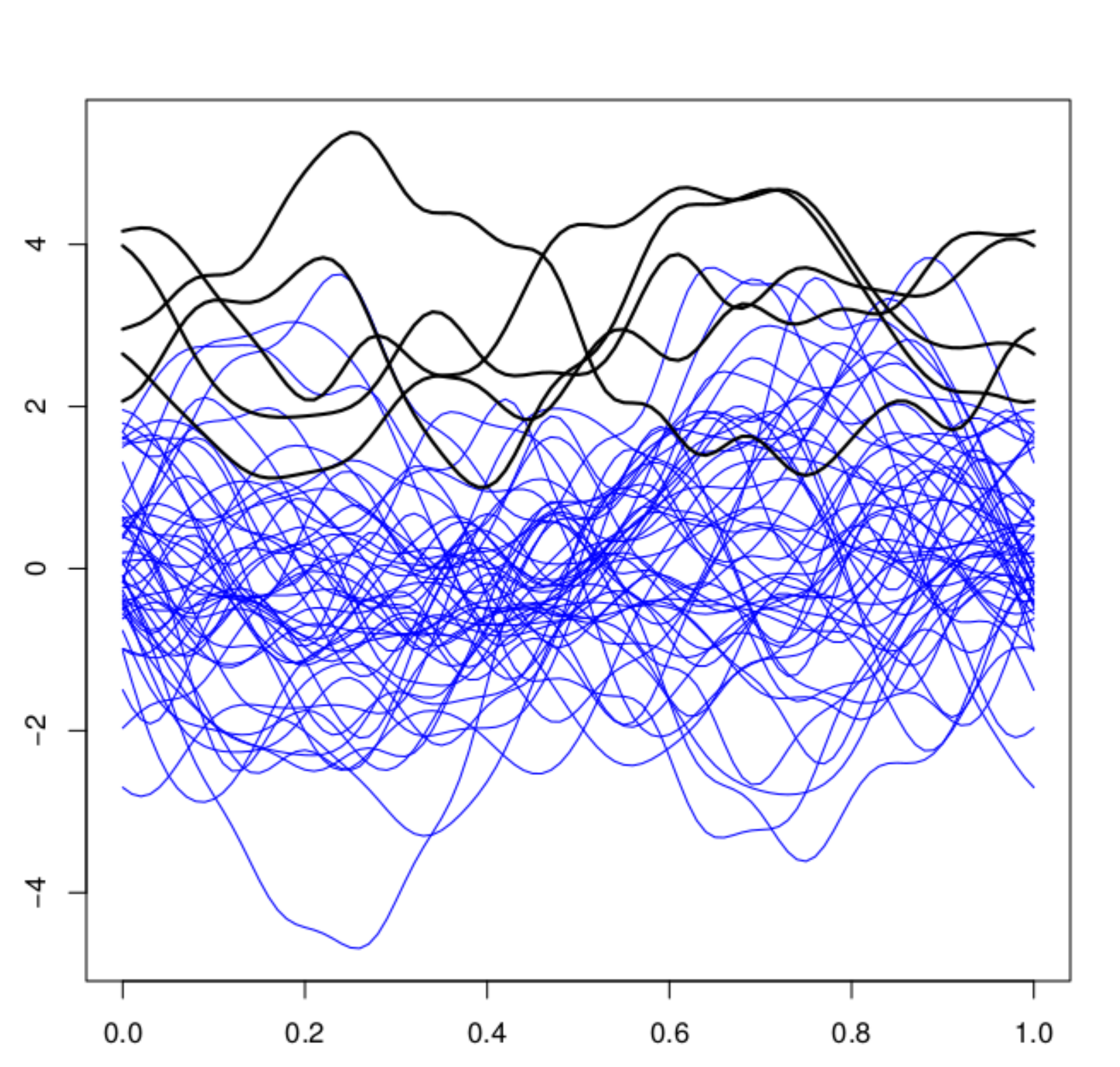}
	\end{center}
	\caption{Simulated functions and outliers}
	\label{}
\end{figure}

To generate the data, we have used a simplified version of  the simulation model used in \cite{Kraus}:
$$ X(t) = \mu(t) + \sqrt 2 \sum_{k=1}^{10} \lambda_k a_k \sin(2 \pi k t) + \sqrt 2 \sum_{k=1}^{10} \nu_k b_k \cos(2 \pi k t),$$
where $\nu_k = \left(\frac{1}{3}\right) ^k, \lambda_k = k^{-3}$, and $a_k$ and $b_k$ are random standard Gaussian independent observations.
The central observations were generated using $\mu(t) = 0$ whereas for the outliers we took $\mu(t) =2- 8\sin(\pi t)$.
For $t$ we used an equally spaced grid of $\mathcal{T}=20$ points in $[0,1]$.\\
The covariance operator of this process, given by $ Cov(s,t)= \sum_{k=1}^{10} A_k(s)A_k(t) + B_k(s)B_k(t) $, 
where $A_k(t) = \sqrt 2 \lambda_k \sin(2 \pi k t)$ and $ A_k(t) = \sqrt 2 \nu_k \cos(2 \pi k t)$, was computed for the comparisons.

We varied  the sample size $n$ within the set $\{0.1E6,1E6,5E6,10E6\}$ and the number of subsamples $m\in \{100,500,1000,10000 \}$.
The proportion of outliers was fixed to $p=0.15$ and $p=0.20$.
We replicated each simulation case $K=5$ times and report the average performance over the replicates.

 We report also the average time in seconds necessary for both a global estimate T0, over the whole sample, and T1, the estimate obtained by fusion (including computing the estimates over subsamples and aggregating them by fusion).
\\
We compare the classical estimator (MLE), the global robust estimate (ROB), the average of the robust estimates from the subsamples (avROB) and the robust fusion estimate (RFM).

The results are shown in Tables \ref{covfun1} and \ref{covfun2} for two proportions of outliers, $p=0.15$ and $p=0.2$ respectively.
\begin{table}[h!]
	\footnotesize
	\caption{Covariance operator estimator. Using the classical and robust estimators over the entire sample, and aggregating by average or fusion of $m$ subsamples estimators. $p=0.15$, $\mathcal{T}=20$.}\label{covfun1} 
	\begin{center}
		\begin{tabular}{rrrrrrrr}
			\hline
			\multicolumn{1}{c}{$n$}&\multicolumn{1}{c}{$m$}&\multicolumn{1}{c}{T0}&\multicolumn{1}{c}{T1}&\multicolumn{1}{c}{MLE}&\multicolumn{1}{c}{ROB}&\multicolumn{1}{c}{avROB}&\multicolumn{1}{c}{RFM}\tabularnewline
			\hline
			$0.05$&$  20$&$ 553$&$18.20$&$24.3$&$5.16$&$5.21$&$5.52$\tabularnewline
			$0.05$&$  50$&$ 543$&$ 7.81$&$24.3$&$5.20$&$5.24$&$5.60$\tabularnewline
			$0.05$&$ 100$&$ 528$&$ 4.79$&$24.3$&$5.20$&$5.17$&$5.58$\tabularnewline
			$0.05$&$1000$&$ 459$&$19.40$&$24.3$&$5.13$&$5.54$&$6.58$\tabularnewline
			$0.10$&$  20$&$2300$&$69.00$&$24.2$&$5.14$&$5.22$&$5.43$\tabularnewline
			$0.10$&$  50$&$2300$&$28.10$&$24.2$&$5.04$&$5.09$&$5.13$\tabularnewline
			$0.10$&$ 100$&$2290$&$15.20$&$24.2$&$5.06$&$5.15$&$5.43$\tabularnewline
			$0.10$&$1000$&$1850$&$21.60$&$24.3$&$5.21$&$5.35$&$6.13$\tabularnewline
			\hline
	\end{tabular}\end{center}	
\end{table}\vspace{-1cm}
\begin{table}[h!]
	\footnotesize
	\caption{Covariance operator estimator.  Using classical and robust estimators over the entire sample, and aggregating by average or fusion of $m$ subsamples estimators. $p=0.2$, $\mathcal{T}=20$.\label{covfun2}} 
	\begin{center}
		\begin{tabular}{rrrrrrrrrr}
			\hline
			\multicolumn{1}{c}{$n$}&\multicolumn{1}{c}{$m$}&\multicolumn{1}{c}{T0}&\multicolumn{1}{c}{T1}&\multicolumn{1}{c}{MLE}&\multicolumn{1}{c}{cvRob}&\multicolumn{1}{c}{avROB}&\multicolumn{1}{c}{RFM}\tabularnewline
			\hline
			$0.05$&$  20$&$ 572$&$17.90$&$30.5$&$0.879$&$ 3.96$&$1.45$\tabularnewline
			$0.05$&$  50$&$ 649$&$ 7.88$&$30.5$&$0.876$&$ 7.34$&$2.10$\tabularnewline
			$0.05$&$ 100$&$ 633$&$ 4.61$&$30.5$&$0.839$&$ 8.86$&$2.43$\tabularnewline
			$0.05$&$1000$&$ 478$&$19.50$&$30.5$&$0.864$&$13.10$&$7.08$\tabularnewline
			$0.10$&$  20$&$1970$&$69.10$&$30.4$&$0.914$&$ 3.83$&$1.36$\tabularnewline
			$0.10$&$  50$&$2030$&$28.10$&$30.4$&$0.921$&$ 4.32$&$1.55$\tabularnewline
			$0.10$&$ 100$&$2020$&$15.10$&$30.4$&$0.840$&$ 8.44$&$2.35$\tabularnewline
			$0.10$&$1000$&$1840$&$21.60$&$30.4$&$0.961$&$12.10$&$5.20$\tabularnewline
			\hline
		\end{tabular}
	\end{center}
\end{table}

\newpage

If the proportion of outliers is moderate, $p=0.15$, the average of the robust estimators still behaves well, better than RFM, but if we increase the proportion of outliers to $p=0.2$, RFM clearly outperforms all the other estimators.

\subsection{Clustering}

We performed a simulation study for large sample sizes, using a model with three clusters  with outliers, introduced in \cite{cagm:97}.
The data were generated using bivariate Gaussian distributions with the following parameters for the clusters and the outliers respectively:
	$$\mu_1 = (0,0),\;\mu_2 =(0,10) ,\;\mu_3 = (6,0),\;\mu_4 = (2,10/3),\;$$
	$$\Sigma_1 = \Sigma_2 = \Sigma_3 = 1.5\times Id,\;\Sigma_4 =20\times Id$$
	where $Id$ is the two dimensional identity matrix.
The outliers were generated with $\mu_4, \Sigma_4$ and $n_4$.
 The sizes of the clusters were fixed at the following values: $n_1 = 15 ,\;n_2 =30 ,\; n_3= 30,\;n_4 = 40.$
	As in \cite{cagm:97},  the outliers lying in the $75\%$ level confidence ellipsoids of the clusters were replaced by others not belonging to that area.
The outliers represent almost $35\%$ of the whole sample.
We used this base simulation and varied the whole sample size, multiplying each $n_i$ by a factor ``fac'' taking the values in $\{10,100,1000, 10000\}$.
So for the smallest sample, we have $n=1150$, and the largest, $n=1150000$.%
%\newpage
	\begin{figure}[h!]
	\begin{center} 
		\includegraphics[width=11.5cm,height=6cm]{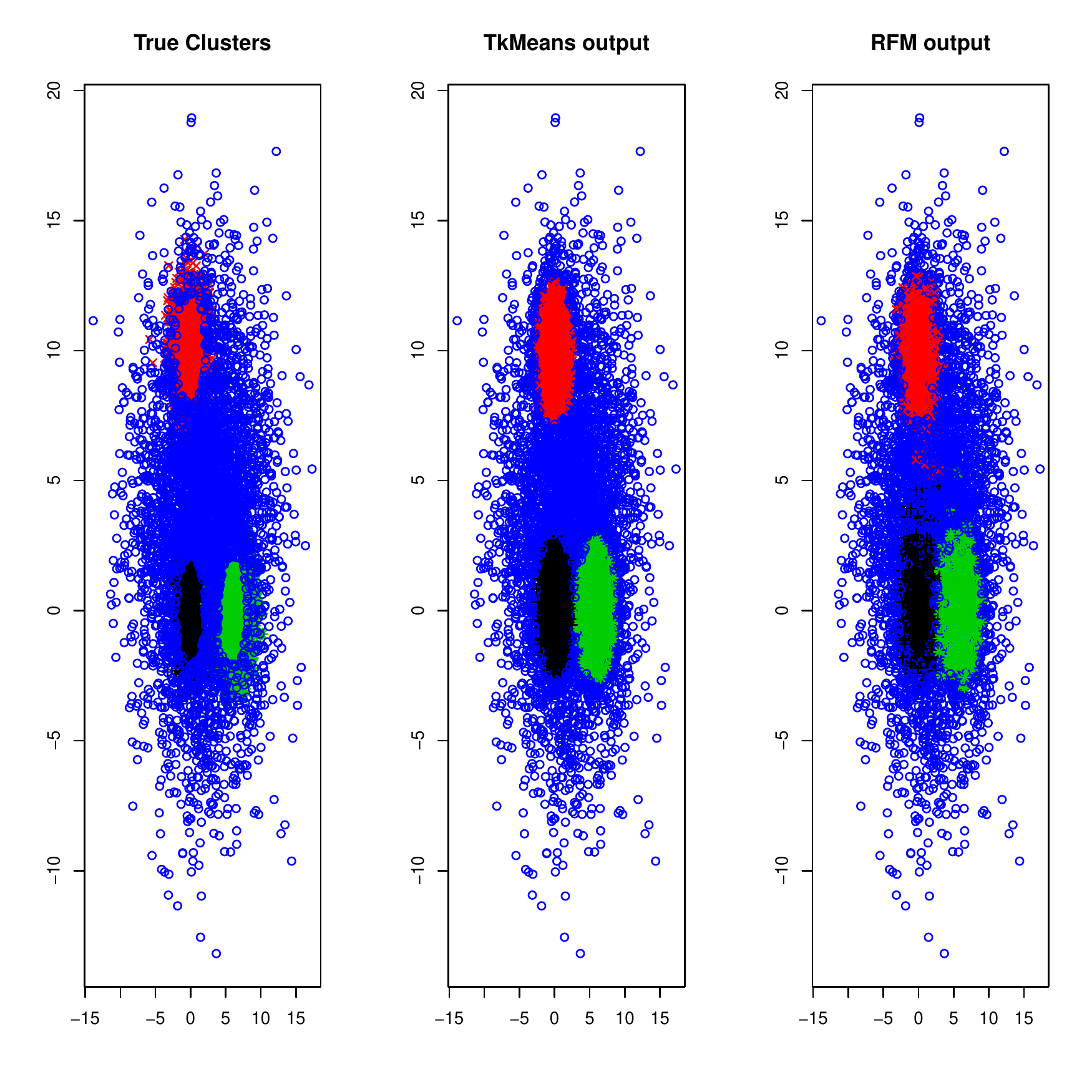}
	\end{center}
	\caption{Left panel: the true clusters. Middle panel: Results obtained by ITkM over the whole sample. Right panel: The output obtained by RFM using $m=100$ subsamples. The outliers are the blue points and $n=11500$.}
	\label{Clusters}
\end{figure}

	For each value of $n$ we varied the number of subsamples $m$ within the values $\{10,50,100,1000, 10000\}$ with the restriction $m<\text{fac}$.
Lastly, when applying the trimmed $k$-means to the samples, we have tested three values for the trimming level, $\alpha_1=0.2,0.35, 0.45$, whereas for the fusion we fixed $\alpha_2=0.1$.\\
	The left hand panel of Figure \ref{Clusters} shows an example of the simulated data-set for $n=11500$, the middle panel shows the results obtained by ITkM applied to the whole sample, and the right hand panel shows the output of the algorithm.\\
The partitions obtained by each approach are compared to the true clusters using the matching error defined by
	\begin{equation}\label{err}
	ME = \min_{s\in \mathcal{S}(k+1)} \ \ \frac{1}{n}  \sum_{i=1}^n \mathbf{1}_{\{y_i \neq s(\hat{y}_i)\}}
	\end{equation}
	where $\mathcal{S}(k+1)$ is the set of permutations of $\{0,1,\dots,k+1\}$, $y_i$ is the true cluster of  observation $i$ and $\hat {y}_i$ is the cluster assigned by the algorithm.	
	The results of the simulation are given in Table \ref{clus20}, where we compare the RFM method, with the ITkM calculated with the whole sample. Columns ME1 and ME2 give the matching errors for ITkM applied to the whole sample and for RFM respectively.
	We also report the average time in seconds necessary for both the global estimator (T0, over the whole sample), and T1, the estimator obtained by fusion (including computing the estimators over subsamples and aggregating them by fusion). Finally T2 is the time using parallel computing.
	
	 As expected, the RFM matching errors are often higher than those of ITkM applied to the whole sample. But the loss of performance is very small in general and increases with $m$. For the smallest values of $m$ with large samples ($n>10000$), RFM has almost the same performance for all values of $\alpha$. On the other hand, increasing the value of $m$ reduces considerably the computation time of RFM.

	\begin{table}[h!]
		\footnotesize
		\caption{RFM for Clustering using different values of the trimming parameter $\alpha_1$.\label{clus20}} 
		\begin{center}
			\begin{tabular}{rrrrrrr}
				\hline
				\multicolumn{1}{c}{$n$}&\multicolumn{1}{c}{$m$}&\multicolumn{1}{c}{T0}&\multicolumn{1}{c}{T1}&\multicolumn{1}{c}{T2}&\multicolumn{1}{c}{ME1}&\multicolumn{1}{c}{ME2}\tabularnewline
				\hline
				\multicolumn{7}{c}{$\alpha_1= 0.2$} \tabularnewline \hline
				$   1150$&$   10$&$   2.89$&$   1.34$&$  0.55$&$0.1539$&$0.1678$\tabularnewline
				$  11500$&$   10$&$  21.20$&$  21.69$&$  6.83$&$0.1594$&$0.1603$\tabularnewline
				$  11500$&$  100$&$  21.20$&$  14.65$&$  4.24$&$0.1594$&$0.1693$\tabularnewline
				$ 115000$&$   10$&$ 274.90$&$ 263.80$&$ 75.11$&$0.1585$&$0.1585$\tabularnewline
				$ 115000$&$  100$&$ 274.90$&$ 218.10$&$ 56.44$&$0.1585$&$0.1591$\tabularnewline
				$ 115000$&$ 1000$&$ 274.90$&$ 141.50$&$ 37.51$&$0.1585$&$0.1693$\tabularnewline
				$1150000$&$   10$&$3452.00$&$3149.00$&$873.40$&$0.1582$&$0.1582$\tabularnewline
				$1150000$&$  100$&$3452.00$&$2609.00$&$680.70$&$0.1582$&$0.1583$\tabularnewline
				$1150000$&$ 1000$&$3452.00$&$2158.00$&$546.90$&$0.1582$&$0.1590$\tabularnewline
				$1150000$&$10000$&$3452.00$&$1434.00$&$374.70$&$0.1582$&$0.1689$\tabularnewline
				\hline
				\multicolumn{7}{c}{ $\alpha_1= 0.35$} \tabularnewline \hline
				$   1150$&$   10$&$   3.45$&$   1.43$&$   0.54$&$0.1287$&$0.1310$\tabularnewline
				$  11500$&$   10$&$  37.87$&$  33.38$&$   9.89$&$0.1037$&$0.1071$\tabularnewline
				$  11500$&$  100$&$  37.87$&$  15.30$&$   4.29$&$0.1037$&$0.1343$\tabularnewline
				$ 115000$&$   10$&$ 427.70$&$ 391.10$&$ 109.60$&$0.1049$&$0.1050$\tabularnewline
				$ 115000$&$  100$&$ 427.70$&$ 307.20$&$  85.70$&$0.1049$&$0.1071$\tabularnewline
				$ 115000$&$ 1000$&$ 427.70$&$ 137.70$&$  38.36$&$0.1049$&$0.1331$\tabularnewline
				$1150000$&$   10$&$4925.00$&$4284.00$&$1166.00$&$0.1052$&$0.1053$\tabularnewline
				$1150000$&$  100$&$4925.00$&$3660.00$&$ 928.20$&$0.1052$&$0.1055$\tabularnewline
				$1150000$&$ 1000$&$4925.00$&$3052.00$&$ 792.90$&$0.1052$&$0.1074$\tabularnewline
				$1150000$&$10000$&$4925.00$&$1397.00$&$ 372.20$&$0.1052$&$0.1336$\tabularnewline
				\hline
				\multicolumn{7}{c}{$\alpha_1= 0.45$} \tabularnewline \hline
				$   1150$&$   10$&$   2.72$&$   1.27$&$   0.52$&$0.1330$&$0.1567$\tabularnewline
				$  11500$&$   10$&$  55.58$&$  34.12$&$   9.80$&$0.1370$&$0.1403$\tabularnewline
				$  11500$&$  100$&$  55.58$&$  13.11$&$   3.65$&$0.1370$&$0.1723$\tabularnewline
				$ 115000$&$   10$&$ 698.90$&$ 586.60$&$ 170.40$&$0.1325$&$0.1330$\tabularnewline
				$ 115000$&$  100$&$ 698.90$&$ 323.90$&$  86.35$&$0.1325$&$0.1355$\tabularnewline
				$ 115000$&$ 1000$&$ 698.90$&$ 122.50$&$  33.53$&$0.1325$&$0.1729$\tabularnewline
				$1150000$&$   10$&$7190.00$&$7087.00$&$2115.00$&$0.1327$&$0.1328$\tabularnewline
				$1150000$&$  100$&$7190.00$&$5654.00$&$1508.00$&$0.1327$&$0.1330$\tabularnewline
				$1150000$&$ 1000$&$7190.00$&$3287.00$&$ 829.60$&$0.1327$&$0.1360$\tabularnewline
				$1150000$&$10000$&$7190.00$&$1258.00$&$ 328.10$&$0.1327$&$0.1726$\tabularnewline
				\hline
		\end{tabular}\end{center}
	\end{table}

\subsubsection{A real data example}

	As an example we have chosen the MNIST data-set of handwritten digits (see https://www.kaggle.com/c/digit-recognizer/data) to compare the performance of the RFM clustering algorithm with the same clustering procedure without splitting the sample (impartial trimmed $k$-means). The digits have been size-normalized and centred in a fixed-size image of 
	$28 \times 28$ pixels.
	
	 The data-set consist of a training sample $\{(X_1,Y_1),\dots,(X_n,Y_n)\}$ of $n=42000$ data, and a test sample of $10000$ data. As it is explained in the aforementioned link: ``this classic dataset of handwritten images has served as the basis for benchmarking classification algorithms". However, as we are interested in clustering we will use only the sample $\{X_1,\dots,X_n\}$, searching for $k=10$ groups. This is a very difficult task: if the labels are chosen at random the probability to get at least half of the 42000 data well identified is  extremely close to zero. We cluster the $42000$ data using both methods. 

 	The design is the same as for the previous simulations. On the one hand we cluster the whole sample using the impartial trimmed $k$-mean algorithm for $k=10$.  On the other hand we use the RFM clustering method given in Table \ref{RFM-cluster} for $m=10,100,500$ and $1000$, with $\alpha_1=0.05$ and $\alpha_1=0.1$.
		 The labels $Y_1,\dots,Y_n$ are only used to calculate the misclassification error rates ME1 and ME2 defined in \eqref{err}.
	
	The results are given for $\alpha_1=0.05$ and $\alpha_2=0.1$ in Table \ref{mnist01} left, and for $\alpha_1=0.1=\alpha_2$ in Table \ref{mnist01} right . They show that: (a) this clustering problem is very difficult (b) the relative efficiencies of the RFM clustering procedures for $\alpha_1=0.05$ are $5\%,2\%,9\%$ and $6\%$ while the computational times fall down drastically to $17\%, 5\%, 3\%$ and $3\%$, for $m=10,100,500$ and $1000$ respectively. For $\alpha_1=0.1$ the efficiencies are $8\%,7\%,9\%$ and $8\%$, the computational times fall down to $16\%,7\%,5\%$ and $4\%$ for $m=10,100,500$ and $1000$ respectively.
	
	\begin{table}[h!]
		\caption{Robust clustering, $\alpha_2=0.1$,  $\alpha_1= 0.05$ (left) and $\alpha_1=0.1$ (right) \label{mnist01}.} 
		\begin{center}
					\footnotesize
			\begin{tabular}{rrrrr}
				\hline
				\multicolumn{1}{c}{$m$}&\multicolumn{1}{c}{T0}&\multicolumn{1}{c}{T1}&\multicolumn{1}{c}{ME1}&\multicolumn{1}{c}{ME2}\tabularnewline
				\hline
				$  10$&$8560$&$1540$&$0.477$&$0.503$\tabularnewline
				$ 100$&$8560$&$ 503$&$0.477$&$0.486$\tabularnewline
				$ 500$&$8560$&$ 244$&$0.477$&$0.520$\tabularnewline
				$1000$&$8560$&$ 253$&$0.477$&$0.508$\tabularnewline
				\hline
		\end{tabular} \hspace{0.3cm}
	\begin{tabular}{rrrrr}
		\hline
		m&\multicolumn{1}{c}{T0}&\multicolumn{1}{c}{T1}&\multicolumn{1}{c}{ME1}&\multicolumn{1}{c}{ME2}\tabularnewline
		\hline
		$  10$&$9570$&$1500$&$0.492$&$0.530$\tabularnewline
		$ 100$&$9570$&$ 705$&$0.492$&$0.525$\tabularnewline
		$ 500$&$9570$&$ 445$&$0.492$&$0.536$\tabularnewline
		$1000$&$9570$&$ 417$&$0.492$&$0.532$\tabularnewline
		\hline
\end{tabular}\end{center}
\end{table}

\section{Concluding remarks}
	We have addressed some fundamental statistical problems in the context of Big Data, namely large samples, in the presence of outliers; location and covariance estimation, covariance operator estimation, and clustering. We have proposed a general robust approach, called the robust fusion method (RFM), and shown how it may be applied to these problems. The simulations gave very good results mainly for the last two problems. \\
	\indent Different statistical challenges go through these problems. Our approach may be adapted to any other task as soon as a robust efficient estimate is available for the corresponding problem.
	
	\begin{itemize}
	\item We have addressed one of the important problems in Big Data, namely when the size of the data-set is too large and one needs to split it into pieces.
	\item In this setup we think that robustness is mandatory.
	\item We have provided a general procedure, a robust fusion method, to deal with these problems.
The method is very general and can be applied to different statistical problems for high dimensional and functional data.
	\item Robust methods should be reasonably simple, in order to work with very large samples.
	\item We have provided a new robust method (RFM) to estimate the covariance operator in the functional data setting.
	\item As particular cases we considered the multivariate location problem, the scatter matrix, the covariance operator, and clustering methods.
We have illustrated through simulated examples the behaviour of RFM for all these problems for different (large) sample sizes.
	\end{itemize}

{\bf Acknowledgement. }To the constructive comments and criticisms from an associated editor and  two
	anonymous referees. For the last author this work has been partially supported by the ECOS project:  No. U14E02.

\end{document}